\numberwithin{equation}{section} 
\newtheorem{thm}[equation]{Theorem}
\newtheorem{cor}[equation]{Corollary} 
\newtheorem{lemma}[equation]{Lemma}
\theoremstyle{definition} 
\newtheorem{defn}[equation]{Definition} 
\newtheorem{remark}[equation]{Remark}
\newcommand{\rfac}[2]{{\left[#1\right]}^{\![#2]}}
\newcommand{\edge}[3]{{}^{\,#2}_{#1 ,\, #3}}
\newcommand{\eedge}[1]{{}^{#1}}
\newcommand{\e}[1]{a#1}
\newcommand{\kk}{{\sf k}}
\newcommand{\ot}{\otimes} 
\newcommand{\Z}{{\mathbb Z}}
\begin{document}

\begin{abstract}
We apply a combinatorial formula of the first author and Rosso, 
for products in Hopf quiver algebras,
to determine the structure of Nichols algebras. 
We illustrate this technique 
by explicitly constructing new examples of Nichols algebras in 
positive characteristic.
We further describe the 
corresponding Radford biproducts and some liftings of these biproducts,
which are new finite dimensional pointed Hopf algebras.
\end{abstract}

\title[Hopf quivers]
{{H}opf quivers and {N}ichols algebras in positive characteristic}

\author{Claude Cibils}
\address{Institut de Math\'ematiques et de Mod\'elisation de Montpellier,
Universit\'e Montpellier 2, F-34095 Montpellier Cedex 5, France}
\email{Claude.Cibils@math.univ-montp2.fr}

\author{Aaron Lauve}
\address{Department of Mathematics, Texas A\&M University,
College Station, TX 77843-3368, USA}
\email{lauve@math.tamu.edu}
\thanks{The second and third authors were partially supported by the Texas Advanced Research Program Grant \#010366-0046-2007.}

\author{Sarah Witherspoon}
\address{Department of Mathematics, Texas A\&M University,
College Station, TX 77843-3368, USA}
\email{sjw@math.tamu.edu}
\thanks{The third author was partially supported by NSA grant H98230-07-1-0038
and NSF grant DMS-0800832.}

\date{March 31, 2009}

\maketitle

\section{Introduction}

Nichols \cite{Ni} anticipated the study of pointed Hopf algebras in 
defining what he termed bialgebras of rank one.
Since then many mathematicians have worked to understand what are now
called Nichols algebras and related pointed Hopf algebras, which 
include quantum universal enveloping algebras and their finite quotients
at roots of unity.
In this paper, we study Nichols algebras via an embedding in Hopf quiver algebras.
We apply a combinatorial formula of the first author and Rosso \cite{CR} for products
in Hopf quiver algebras to obtain relations in Nichols algebras.
As a result, we discover some new examples in positive characteristic.
This formula (see \eqref{CR formula} below) was used by van Oystaeyen and Zhang \cite{vOZ} 
to classify simple-pointed Hopf subalgebras of Schurian Hopf quiver algebras
and by Zhang, Zhang, and Chen \cite{ZZC} to understand Hopf quiver algebras for 
certain types of quivers.
Our main results show that the formula is useful and computable in more
general settings, and thus is a valuable tool in the study of the 
fundamental Nichols algebras. 

Quivers and Hopf algebras first appeared together in the work of Green \cite{G} and Green and Solberg \cite{GS}. They studied finite dimensional basic Hopf algebras by developing a theory of path algebras having Hopf algebra structures.
The first author and Rosso \cite{CR97} also worked on path algebras, and then developed 
their dual theory of Hopf quiver algebras \cite{CR}. These are path coalgebras having pointed Hopf algebra structures. 
Van Oystaeyen and Zhang \cite{vOZ} use the theory of Hopf quivers to prove a dual Gabriel Theorem for pointed Hopf algebras.

Each pointed Hopf algebra
has coradical a group algebra. When the group is abelian and the field
has characteristic 0, Andruskiewitsch and Schneider \cite{AS} largely
classified the finite dimensional pointed Hopf algebras
as liftings
of Radford biproducts (or bosonizations) of Nichols algebras.
Only a few examples are known when the group is nonabelian, however
there is much recent progress 
(see \cite{AF08,AZ07,HS} and references given in these papers).
Less has been done in positive characteristic.
Scherotzke \cite{SSch} classified finite dimensional
pointed rank one Hopf algebras in positive characteristic
that are generated by grouplike and skew-primitive elements.
Her work shows that there are more possibilities
than occur in characteristic 0 (cf.\ Krop and Radford \cite{KR}).

In Section \ref{preliminaries} we collect basic facts about Hopf quivers
and Nichols algebras, including the product formula of the first author and Rosso (Theorem \ref{product-formula} below).
A product of paths in the Hopf quiver algebra is a certain sum of paths. Relations result from cancellations of paths, providing a combinatorial visualization of
the algebra structure.

In Section \ref{charp}, we use the product formula to describe Nichols algebras and pointed Hopf algebras in
positive characteristic $p$ arising from certain two-dimensional 
indecomposable  Yetter--Drinfeld modules
of cyclic groups. 
These modules are nonsemisimple, and so 
are purely a positive characteristic phenomenon. 
If the cyclic group $G$ has order~$n$ (divisible by $p$), these modules
lead to Hopf algebras of dimension $16n$ in characteristic 2
(Theorem~\ref{rank2-char2}) 
and of dimension $p^2n$ in odd characteristic $p$ (Theorem \ref{rank2-charp}).
These Hopf algebras all have rank two in the sense that the dimension of
the space of skew-primitive elements, modulo the group algebra of $G$, is two.

The Hopf algebras of Section \ref{charp} are (coradically) graded.
More general pointed Hopf algebras in
positive characteristic may be found by applying the lifting method
of Andruskiewitsch and Schneider \cite{AS} to obtain filtered pointed Hopf algebras
whose associated graded Hopf algebras are those of Section \ref{charp}.
We give some examples of such liftings in Section \ref{future}.

\section{Hopf quivers and Nichols algebras}\label{preliminaries}

Let $\kk$ be a field and $G$ a finite group.
Our notation will be taken from \cite{AS02} and \cite{CR}. 

A (right) {\em Yetter--Drinfeld} module $V$ over $\kk G$ is a $G$-graded
$\kk G$-module for which $({}^gV)\cdot h = {}^{h^{-1}gh}V$ for all $g,h\in G$
(left superscripts denote the grading).
We will use the following standard construction of such modules.
(See \cite[Section 3.1]{AG} in the semisimple setting.
The ``same'' construction works in the nonsemisimple setting,
for example, see \cite[Prop.\ 3.3 and Rem.\ 3.5]{CR97}.)

For fixed $g\in G$, let $Z(g)$ denote its centralizer and let $M$ be a right $\kk Z(g)$-module. Take for $V$ the right $\kk G$-module induced from $Z(g)$, that is,
\[
    V := M\ot_{\kk Z(g)} \kk G,
\]
where $\kk G$ acts on the right tensor factor by multiplication.
Then $V$ becomes a Yetter--Drinfeld module over $\kk G$ by setting ${}^{k^{-1}gk}V:=M\ot k$ 
for all $k\in G$, and ${}^lV:=0$ if $l$ is not conjugate to $g$. Let
$$
 B=B(M) := \kk G \ot_{\kk} M \ot_{\kk Z(g)} \kk G
$$
be the corresponding {\em Hopf bimodule} over $\kk G$.
That is, $B$ has a $G$-bigrading (equivalently is a $\kk G$-bicomodule)
given by
$$
   {}^{h k^{-1}gk} B ^h:= h\ot M\ot k,
$$
and $B$ is a $\kk G$-bimodule on which $\kk G$ acts by left and right multiplication.
Following~\cite{CR}, we realize $B$ in terms of a {\em Hopf quiver:} this is a directed multi-graph $Q$ with vertex set $G$ and arrows (directed edges) determined as follows. There are arrows from $h$ to $h'$ if and only if $h'=hk^{-1}gk$ for some $k\in G$; these arrows are in one-to-one correspondence with a basis of the vector space $h\ot M\ot k$. 
Fixing a basis $\{ m_i : i \in \mathcal I\}$ for $M$, and a set
of representatives $k$ of cosets $Z(g)\backslash G$, we 
denote such an arrow by 
\[
   \e{\edge{h'}{i}{h}}= \e{\edge{h'}{m_i}{h}} := h\ot m_i\ot k
\]
if $h'=hk^{-1}gk$. 
The {\em source} and {\em target} for this arrow are 
$s(\e{\edge{h'}{i}{h}})=h$ and $t(\e{\edge{h'}{i}{h}})=h'$, respectively. 
When convenient, we will also write, for any $m\in M$,
\[
   \e{\edge{hk^{-1}gk}{m}{h}} := h\ot m\ot k. \]

Finally, we let $\kk Q$ denote the {\em path coalgebra} over $Q$. The underlying vector space for $\kk Q$ has basis all {\em paths} in $Q$, i.e., sequences of arrows $a_n\cdots a_2a_1$ such that $s(a_{j+1})=t(a_j)$ for all $1\leq j\leq n$. (In the interest of clarity, this is often written as $[a_n,\ldots ,a_2,a_1]$ in what follows.) Paths of length zero (vertices) are also allowed. 
Coproducts in $\kk Q$ are determined by $\Delta(h)=h\ot h$ for all $h\in G$ and 
\[
\Delta(a_n\cdots a_2a_1) = \sum_{i=0}^{n} a_n\cdots a_{i+1}\ot a_i \cdots a_1, 
\]
where $a_0$ and $a_{n+1}$ are understood as $s(a_1)$ and $t(a_n)$ respectively. Note that $\e{\edge{h}{m}{1}}$ is skew-primitive for all $h\in G$ and $m\in M$. The path coalgebra $\kk Q$ is also known as the {\em cotensor coalgebra}
of $B$ over $\kk G$ (see the text preceding \cite[Defn.\ 2.2]{CR}). Theorem 3.3 of \cite{CR} provides $\kk Q$ with an algebra structure that makes it a pointed graded Hopf algebra. The algebra structure is determined by the $\kk G$-bimodule structure of $B$, which we now describe explicitly, following \cite{CS}. 

The left action of $\kk G$ on $B$ is diagonal, with the left regular
action on the left factor of $\kk G$ and the trivial action on the induced module
$M\ot_{\kk Z(g)} \kk G$. The right action of $\kk G$ on $B$ is diagonal,
with the right regular action on the left factor $\kk G$ and the induced action
on $M\ot_{\kk Z(g)} \kk G$. That is,
\begin{align}
  l\cdot \e{\edge{hk^{-1}gk}{m}{h}} &\ =\  \e{\edge{lhk^{-1}gk}{m}{lh}},
\hbox{ and} 
\label{laction}\\
  \e{\edge{hk^{-1}gk}{m}{h}}\cdot l &\ =\  \e{\edge{hl(k')^{-1}gk'}{m\cdot l'}{hl}}
 \ = \ \e{\edge{hk^{-1}gkl}{m\cdot l'}{hl}}, \label{raction}
\end{align}
where $k'$ is the unique representative of $Z(g)\backslash G$ such that $kl=l'k'\in Z(g)k'$.
The formula for multiplying two arrows from \cite[\S3]{CR} is
$$
  a\cdot b = [t(a)\cdot b][a\cdot s(b)] + [a\cdot t(b)][s(a)\cdot b],
$$
where the square brackets $[\ldots ][\ldots ]$ mean ``concatenate paths.'' In particular,
\begin{align*}
\notag  \e{\edge{k^{-1}gk}{m}{1}}\cdot \e{\edge{h^{-1}gh}{n}{1}} &=
    [{ k^{-1}gk}\cdot \e{\edge{h^{-1}gh}{n}{1}}][\e{\edge{k^{-1}gk}{m}{1}}\cdot { 1}]
  + [\e{\edge{k^{-1}gk}{m}{1}}\cdot { h^{-1}gh}][{ 1}\cdot \e{\edge{h^{-1}gh}{n}{1}}] \\
\notag &= [\e{\edge{k^{-1}gkh^{-1}gh}{n}{k^{-1}gk}}][\e{\edge{k^{-1}gk}{m\cdot1}{1}}]
   +[\e{\edge{k^{-1}gkh^{-1}gh}{m\cdot l'}{h^{-1}gh}}]
   [\e{\edge{h^{-1}gh}{n}{1}}]  
, 
\end{align*}
where $kh^{-1}gh=l'k'$ as above. This is
a sum  of paths of length 2 starting at 1 and ending at
$k^{-1}gkh^{-1}gh$.

\begin{remark}
More generally, one may fix several group elements $g_1,\ldots, g_s$ and let $V$ be a Yetter--Drinfeld module over $\kk G$ of the form $V=\oplus_{j=1}^s (M_j\ot_{\kk Z(g_j)} \kk G)$.
The corresponding Hopf bimodule and Hopf quiver are defined analogously.
While \cite[Thm.~3.3]{CR} holds in this greater generality, we restrict our attention to the case $s=1$ here. 
\end{remark}

Evidently, the above algebra structure is not the one commonly placed on $\kk Q$; we call that one the ``path algebra product'' or ``concatenation product'' here. The path algebra structure will be used in what follows, so we recall the details now. It is the $\kk$-algebra with vertices and arrows as generators subject to the following relations: vertices $v$ are orthogonal idempotents, and arrows $a,a' \in Q$ satisfy $av=\delta_{s(a),v}a$, $va'=\delta_{v,t(a')}a'$ and $aa'= 0$ whenever $s(a)\neq t(a')$. 

Returning to the path coalgebra $\kk Q$,
to describe products for paths of length $n>1$, we follow \cite{CR} and introduce the notion of $p{\scriptscriptstyle\,}$-{\em thin splits}. A $p{\scriptscriptstyle\,}$-{thin split} of a path $\alpha$ in $Q$ is a sequence $(\alpha_p, \ldots, \alpha_2,\alpha_1)$ of vertices and arrows such that the path algebra product $\alpha_p\cdots\alpha_{2} \alpha_1$ is $\alpha$. 
The $p{\scriptscriptstyle\,}$-thin splits of a path $\alpha=a_n\cdots a_2a_1$ are in bijection with the set $D^p_n$ of all cardinality-$n$ subsets of $\{1,2,\ldots,p\}$ (the choice of where to place the $n$ arrows $a_i$ uniquely determines where and which vertices appear in the sequence). We view $d\in D^p_n$ as a function $\alpha \mapsto d\alpha=((d\alpha)_p,\ldots,(d\alpha)_2,(d\alpha)_1)$ from paths to $p{\scriptscriptstyle\,}$-thin splits and write $\overline{d}$ for $\{1,2,\ldots,p\} \setminus d$ (viewed as $p{\scriptscriptstyle\,}$-thin split instructions for paths of length $p-n$). Finally, given paths $\alpha$ and $\beta$ of lengths $n$ and $m$, respectively, and a choice $d\in D^{n+m}_n$, we define a product on the $(n+m)$-thin split of $\alpha$ and $\beta$ along $d$ by the formula
\begin{equation*}\label{thin-split product}
(d\alpha)(\overline d\beta) : =  [(d\alpha)_{m+n}\cdot (\overline{d}\beta)_{m+n}]
  \cdots [(d\alpha)_2\cdot (\overline{d}\beta)_2] [(d\alpha)_1\cdot (\overline{d}\beta)_1],
\end{equation*}
where each $(d\alpha)_i \cdot (\overline d\beta)_i$ is an instance of either \eqref{laction} or \eqref{raction} and square brackets are again understood as the concatenation product. 
The following result is due to the first author and Rosso.

\begin{thm}[{\cite[Thm.\ 3.8]{CR}}]
\label{product-formula}
Let $\alpha$ and $\beta$ be paths of lengths $n\geq0$ and $m\geq0$, respectively,
in $Q$.
Their product in the Hopf quiver algebra $\kk Q$ is given by
\begin{gather}\label{CR formula}
  \alpha\cdot\beta = \sum_{d\in D^{n+m}_n} (d\alpha)(\overline{d}\beta).
\end{gather}
\end{thm}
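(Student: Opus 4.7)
The plan is to induct on the total length $p = n + m$ of the two paths. The base cases $p \leq 2$ are immediate from the constructions already recorded in the preliminaries: for $p = 0$ both factors are vertices and \eqref{CR formula} collapses to group multiplication in $\kk G$; for $p = 1$ the formula encodes the left or right $\kk G$-action on $B$ via \eqref{laction} and \eqref{raction}; and for $p = 2$ with $n = m = 1$ the sum on the right recovers the explicit two-arrow product displayed just above the theorem.

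For the inductive step with $n, m \geq 1$ and $p \geq 3$, set $\alpha'' = [a_n, \ldots, a_2]$ and $\beta'' = [b_m, \ldots, b_2]$, and partition $D^{n+m}_n$ according to whether $1 \in d$ (so position $1$ of the thin split is occupied by $a_1$) or $1 \notin d$ (so it is occupied by $b_1$). The two pieces biject with $D^{n+m-1}_{n-1}$ and $D^{n+m-1}_{n}$ respectively, by deleting position $1$ and reindexing; in each case the thin-split product at positions $2,\ldots,p$ is itself a thin-split product for the shorter pair $(\alpha'',\beta)$ or $(\alpha,\beta'')$. Applying the inductive hypothesis, the right-hand side of \eqref{CR formula} rearranges as
\[
\bigl[\alpha'' \cdot \beta\bigr]\bigl[a_1 \cdot s(\beta)\bigr] \ + \ \bigl[\alpha \cdot \beta''\bigr]\bigl[s(\alpha) \cdot b_1\bigr],
\]
where each inner $\cdot$ is a Hopf quiver product of strictly smaller total length (known by induction) and adjacent square brackets denote path concatenation.

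The remaining — and main — obstacle is to show that the algebraic product $\alpha \cdot \beta$ defined in \cite[Thm.~3.3]{CR} satisfies the same recursion. I would exploit the graded bialgebra structure of $\kk Q$: applying $\Delta(\alpha \cdot \beta) = \Delta(\alpha) \cdot \Delta(\beta)$ together with the explicit coproduct
\[
\Delta(\alpha) = \sum_{i=0}^{n} [a_n, \ldots, a_{i+1}] \otimes [a_i, \ldots, a_1]
\]
and extracting the bidegree $(p-1,1)$ component, one sees that the bottom arrow of every summand of $\alpha \cdot \beta$ is forced to equal either $a_1 \cdot s(\beta)$ (coming from $\alpha$'s lowest coproduct factor paired with a vertex-slice of $\beta$) or $s(\alpha) \cdot b_1$ (from the symmetric case), since degree-one products are prescribed by \eqref{laction} and \eqref{raction}. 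These two arrow types have distinguishable sources and targets, so the corresponding contributions to $\alpha \cdot \beta$ decouple, and matching them with the two summands of the combinatorial expression above yields the claimed recursion. The delicate point throughout is the combinatorial bookkeeping — tracking the bijection $D^{n+m}_n \leftrightarrow D^{n+m-1}_{n-1} \sqcup D^{n+m-1}_n$ and verifying that the vertices inserted by each thin split act on each arrow via precisely the rules of \eqref{laction}--\eqref{raction} — which is a routine but notation-heavy check. Closing the induction completes the proof.
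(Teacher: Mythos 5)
This theorem is quoted from \cite[Thm.~3.8]{CR}; the present paper gives no proof of it, so there is no internal argument to compare yours against. Judged on its own terms, your strategy --- show that both sides of \eqref{CR formula} satisfy the recursion $\alpha\cdot\beta=[\alpha''\cdot\beta][a_1\cdot s(\beta)]+[\alpha\cdot\beta''][s(\alpha)\cdot b_1]$ and induct on $n+m$ --- is viable, and it is close in spirit to deriving the formula from the characterization in \cite[Thm.~3.3]{CR} of the product as the unique graded bialgebra structure on the cotensor coalgebra with prescribed behaviour in degrees $0$ and $1$. The partition of $D^{n+m}_n$ according to whether $1\in d$ is correct, and so is the idea of pinning down the homogeneous degree-$(n+m)$ element $\alpha\cdot\beta$ by its bidegree $(n+m-1,1)$ coproduct component.

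Two points need repair. First, the induction does not close as written: your inductive step assumes $n,m\geq 1$ and your base cases stop at $n+m=2$, yet the step invokes $\alpha''\cdot\beta$ with $\alpha''$ of length $n-1$, so when $n=1$ you need the case (vertex)$\cdot$(path of length $\geq 2$), which is neither a base case nor produced by the step. You must either verify separately that multiplication by a vertex is the diagonal action of \eqref{laction}--\eqref{raction} applied arrow by arrow, or run the same recursion with one of the two terms empty. Second, the assertion that the two candidate bottom arrows $a_1\cdot s(\beta)$ and $s(\alpha)\cdot b_1$ ``have distinguishable sources and targets, so the contributions decouple'' is false in general: both have source $s(\alpha)s(\beta)$, their targets can coincide (in Section 3 of this very paper, $a\cdot 1$ and $1\cdot b$ are both arrows from $1$ to $g$), and $(d\alpha)_1\cdot(\overline{d}\beta)_1$ can even be a linear combination of arrows (there, $b\cdot g=g(a+b)$). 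Fortunately no decoupling is needed: concatenation $P\otimes e\mapsto [P][e]$ is a linear left inverse of $(\pi_{n+m-1}\otimes\pi_1)\circ\Delta$ on the span of paths of length $n+m$, so computing that component of $\Delta(\alpha)\cdot\Delta(\beta)$ --- only the terms $(i,j)=(1,0)$ and $(0,1)$ survive for degree reasons --- and then concatenating yields the recursion directly, with no need to separate the two contributions. With these two repairs the argument is sound.
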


If $n=m=0$ above, i.e., $\alpha$ and $\beta$ are vertices, then $d=\overline d =\emptyset$ and \eqref{CR formula} reduces to the ordinary product in $G$. This completes the description of the algebra structure on $\kk Q$. We are ready to define Nichols algebras in our context, following \cite[\S2.2]{Ni}.

\begin{defn} \label{defn:narb}
Fix a Yetter--Drinfeld module $V$ over $\kk G$.
The {\em Nichols algebra} ${\mathcal B}(V)$ is the subalgebra of $\kk Q$ 
generated by $V$ (equivalently, by the arrows $\e{\edge{k^{-1}gk}{m}{1}}$).
Its {\em Radford biproduct} (or {\em bosonization})
is the subalgebra of $\kk Q$ generated by $V$ and $G$.
\end{defn}

The Radford biproduct is often denoted 
${\mathcal B}(V)\# \kk G$, using {\em smash product} notation, and this notation will also be used elsewhere, so we define it now (see e.g.\ \cite{Mo}):
If $A$ is any $\kk$-algebra on which the group $G$ acts on the right by automorphisms,
the smash product algebra $A\# \kk G$ is $\kk G\ot A$ as a vector space, and multiplication
is given by $(g\ot a)(h\ot b):= gh\ot (a\cdot h)b$ for all $a,b\in A$ and
$g,h\in G$. We abbreviate the element $g\ot a$ by $ga$.
Sometimes the smash product algebra has the additional structure of a Hopf algebra,
as is the case with the Radford biproduct.

\begin{remark}\label{skew-prim}
{\em (i)\ }
There are many equivalent definitions of  Nichols algebras and of 
the corresponding Radford biproducts.
See in particular the elaboration by Andruskiewitsch and Gra\~na
in \cite[\S3.2]{AG} of Nichols' original definition: The Radford
biproduct  ${\mathcal B}(V)\# \kk G$ is the image of the unique bialgebra
map from the path algebra to the path coalgebra 
preserving $Q$.
Other useful definitions are summarized in \cite[\S\S2.1,2.2]{AS00}, including
Schauenburg's elegant formulation of Woronowicz' quantum symmetrizers
(see \cite{Sch96,Wo}).

{\em (ii)\ }
A potential advantage of the subalgebra formulation $\mathcal B(V) \subseteq \kk Q$ is that one may be able to write relations more simply or find them more readily, since $\kk Q$ is a Hopf algebra with a combinatorial basis whose structure is governed by the combinatorial formula \eqref{CR formula}. Also, \eqref{CR formula} is particularly well-suited to computation; the new algebras we present in Section \ref{charp} were found using an implementation in {\em Maple} \cite{Maple}.
\end{remark}

The Hopf quiver algebra $\kk Q$ is graded by the {\em nonnegative} integers, 
where we assign degree 0 to vertices and degree 1 to arrows. 
As a consequence of the 
definitions and the coalgebra structure of $\kk Q$, 
the skew-primitive elements in the Radford biproduct 
${\mathcal B}(V)\# \kk G$ all lie in degrees 0 and 1. Conversely, we have the 
following theorem, which
is an immediate consequence of \cite[Defn.\ 2.1 and Prop.\ 2.2]{AS02}
(valid in arbitrary characteristic)
upon translation into a statement about smash products.
See also \cite{AG}.
Let $T(V)$ denote the tensor algebra of $V$.
The smash product $T(V)\# \kk G$ is a Hopf algebra with $\Delta(g)=g\ot g$
for all $g\in G$ and $\Delta(v)=v\ot 1+g\ot v$ for all $v\in V_g$.

\begin{thm}[Andruskiewitsch--Schneider]\label{check-skew-prims}
Let $\tilde{I}$ be a homogeneous Hopf ideal of the smash product $T(V)\# \kk G$, generated by elements of $T(V)$ in degrees $\geq2$. The quotient $T(V)\# \kk G / \tilde{I}$ is isomorphic to ${\mathcal B}(V)\# \kk G$ if the skew-primitive elements in $T(V)\# \kk G / \tilde{I}$ all lie in degrees 0 and 1.
\end{thm}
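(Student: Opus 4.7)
The plan is to reduce Theorem \ref{check-skew-prims} to the characterization of $\mathcal{B}(V)$ among $\N$-graded braided Hopf algebras in the Yetter--Drinfeld category given in \cite[Defn.~2.1, Prop.~2.2]{AS02}. That characterization identifies $\mathcal{B}(V)$ as the (unique up to isomorphism) graded connected braided Hopf algebra $R = \bigoplus_{n\ge 0} R_n$ with $R_0 = \kk$, $R_1 = V$ generating $R$ as an algebra, and whose braided primitives $P(R)$ equal $R_1$.

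First, I would identify $\tilde I$ with an ideal coming from $T(V)$. Since $\tilde I$ is generated by elements of $T(V) = T(V)\ot 1 \subseteq T(V)\#\kk G$, one may write $\tilde I = I\cdot \kk G$, where $I \subseteq T(V)$ is the ideal generated by those same elements together with their $G$-translates (which lie in $T(V)$ because $G$ acts on $T(V)$). The hypothesis that $\tilde I$ is a Hopf ideal then translates, via the standard correspondence between Hopf ideals of $T(V)\#\kk G$ that are ``induced from $T(V)$'' and braided Hopf ideals of $T(V)$ in the Yetter--Drinfeld category, into the statement that $I$ is a graded braided Hopf ideal of $T(V)$ in degrees $\ge 2$. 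Setting $R := T(V)/I$, one obtains $T(V)\#\kk G/\tilde I \cong R \# \kk G$, where $R$ is a graded connected braided Hopf algebra generated in degree $1$ with $R_1 = V$.

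Next, I would translate the skew-primitive assumption on the smash product into a statement about the braided primitives of $R$. Using the Radford biproduct coproduct formula together with the Yetter--Drinfeld $G$-grading of $R$, one checks that a $G$-homogeneous element $r \in R_n$ of $G$-degree $h$ is braided-primitive in $R$ if and only if $r\#1 \in R\#\kk G$ is $(h,1)$-skew primitive. Hence the assumption that $R\#\kk G$ has no skew primitives of $\N$-degree $\ge 2$ forces $P(R) \subseteq R_0 \oplus R_1$, and then $P(R) = V$ since $R_0 = \kk$ contains no nonzero primitives. The Andruskiewitsch--Schneider characterization now yields $R \cong \mathcal{B}(V)$, and bosonizing gives $T(V)\#\kk G/\tilde I \cong \mathcal{B}(V)\#\kk G$.

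The main technical obstacle is the converse direction of the second step: one must show that \emph{every} homogeneous skew primitive of $R\#\kk G$ in $\N$-degree $n \ge 2$ actually arises from a braided primitive of $R_n$, not merely from some more complicated combination of elements of lower $\N$-degree. This requires a careful analysis of how the Radford coproduct interacts with the $\N$-grading (keeping track simultaneously of the $G$-grading), and is where the assumption that $\tilde I$ is generated in degrees $\ge 2$ is essential: it ensures that the $\N$-grading on $T(V)\#\kk G$ descends cleanly to $R\#\kk G$ so that the biproduct dictionary is well-behaved. The remaining steps are formal once one has set up this dictionary and invoked the AS characterization.
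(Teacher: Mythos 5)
Your proposal is correct and is essentially the paper's own argument: the paper offers no proof beyond asserting that the theorem is ``an immediate consequence of [AS02, Defn.~2.1 and Prop.~2.2] upon translation into a statement about smash products,'' and your write-up is exactly that translation (pass to $R=T(V)/I$, identify braided primitives of $R$ with skew-primitives of $R\#\kk G$, invoke the Andruskiewitsch--Schneider characterization, and bosonize). One small remark: the ``converse direction'' you flag as the main obstacle is not actually needed, since the hypothesis is used only through its contrapositive --- a nonzero $G$-homogeneous braided primitive in $R_n$, $n\geq 2$, would already yield a forbidden skew-primitive via the easy direction of the dictionary.
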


We will use this theorem in  Section \ref{charp} to find new Nichols algebras as follows. 
We first apply the formula \eqref{CR formula} to find relations among
the generators of the subalgebra $\mathcal B(V)$ of $\kk Q$. 
We next take the quotient of $T(V)\# \kk G$ by these relations and show that the quotient has skew-primitives only in degrees $0$ and $1$. Theorem \ref{check-skew-prims} then ensures that we have found defining relations for $\mathcal B(V)$.

\section{New Nichols algebras in positive characteristic}\label{charp}

In this section we use the Hopf quiver approach to construct new Nichols algebras and related finite dimensional pointed Hopf algebras in positive characteristic.

Throughout this section, 
let $\kk$ be a field of characteristic $p$ and let $G=\langle g \rangle$ be the cyclic group $\Z/n\Z$, with $n$ divisible by $p$. 
Let $V$ be the following indecomposable (nonsemisimple) $\kk G$-module of dimension 2:
$V$ has  basis $\{v_1,v_2\}$ for which the (right) action of $\kk G$
is defined by
$$
  v_1 \cdot g = v_1, \ \ \ v_2 \cdot g = v_1+v_2.
$$
Since $G$ is abelian, $V$ is automatically a Yetter--Drinfeld module
with $V_g=V$, and 
the corresponding Hopf bimodule is $\kk G\ot V$. Let
$$ 
    \e{\edge{g^{j+1}}{i}{g^j}} = g^j\ot v_i \ \ \
     (1\leq i\leq 2, \ 0\leq j\leq n-1),
$$
and let
$$
   a=\e{\edge{g}{1}{1}} = 1\ot v_1 \ \mbox{ and } \ b=\e{\edge{g}{2}{1}}
        =1\ot v_2.
$$
Let $\kk Q$ be the corresponding Hopf quiver algebra.
We wish to determine the structure of the subalgebra of $\kk Q$
 generated by $a$ and $b$; by Definition \ref{defn:narb} 
this is the Nichols algebra ${\mathcal B}(V)$. 
This structure is somewhat different when $p=2$, so we deal with this case first.

\begin{thm}\label{rank2-char2}
Assume $\kk$ has characteristic $2$.
The Nichols algebra ${\mathcal B}(V)$ is the quotient of $T(V)$
by the ideal generated by
$$
   a^2, \quad\,\, b^4, \quad\,\, b^2a+ab^2+aba, \quad\,\, abab+baba.
$$
It has dimension $16$ and basis
$$
 1, \ a, \ b, \ ab, \ ba, \ b^2, \ aba, \ ab^2, \ bab, \ b^3, \ abab, \ ab^3, \ bab^2,
    \ abab^2, \ bab^3, \ abab^3.
$$
\end{thm}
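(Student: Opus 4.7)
The plan is to follow the recipe outlined at the end of Section \ref{preliminaries}: use the product formula \eqref{CR formula} to establish the four listed relations inside $\mathcal{B}(V) \subseteq \kk Q$, then invoke Theorem \ref{check-skew-prims} applied to the ideal $\tilde I$ they generate in $T(V)\# \kk G$.

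First I would verify the relations by direct computation with \eqref{CR formula}. For $a \cdot a$, both elements of $D^2_1$ produce the same length-$2$ path $[\e{\edge{g^2}{1}{g}}, a]$, so $a^2$ vanishes in characteristic $2$; this is the essential source of the $a^2 = 0$ relation. The relations $b^4 = 0$, $b^2a + ab^2 + aba = 0$, and $abab + baba = 0$ follow by similar but lengthier enumerations over $D^{n+m}_n$, in which the nonsemisimple right action $v_2 \cdot g = v_1 + v_2$ splits each $b$-arrow crossed past a group element into a sum of an $a$-type and a $b$-type arrow, producing sums of paths that collapse to zero in characteristic $2$. Next I would verify that $\tilde I$ is a Hopf ideal---each generator is $(1, g^k)$-skew-primitive modulo $\tilde I$, checked by a direct coproduct computation---and then use the relations to reduce every monomial in $a, b$ to one of the sixteen listed words: $a^2 = 0$ forbids consecutive $a$'s, $b^4 = 0$ caps runs of $b$'s at three, $b^2 a = a b^2 + aba$ pulls isolated $a$'s leftward past $b^2$, and $baba = abab$ handles the remaining ambiguity. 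Consequently $\dim \bigl(T(V)\# \kk G / \tilde I\bigr) \leq 16n$.

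The key verification is then the hypothesis of Theorem \ref{check-skew-prims}: no nonzero element of degree $\geq 2$ in $T(V)\# \kk G / \tilde I$ is skew-primitive. I would compute $\Delta$ on each of the thirteen listed degree-$\geq 2$ words by iterating $\Delta(v) = v \ot 1 + g \ot v$ for $v \in V$ and reducing with the four relations; in each graded piece, a linear-algebra check then shows that no nonzero combination satisfies $\Delta(x) = x \ot 1 + g^k \ot x$ for any $k$. Theorem \ref{check-skew-prims} then gives $T(V)\# \kk G / \tilde I \cong \mathcal{B}(V)\# \kk G$, whence $\mathcal{B}(V) = T(V)/I$; linear independence of the sixteen words, visible through their distinct highest-length leading terms in $\kk Q$ via \eqref{CR formula}, then gives the stated basis and dimension $16$.

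I expect this last step, the skew-primitive verification, to be the main obstacle: the coproducts grow combinatorially with degree, and cross-terms arising from the nonsemisimple action $v_2 \cdot g = v_1 + v_2$ require careful bookkeeping. As Remark \ref{skew-prim}(ii) notes, this is precisely the sort of computation aided by the authors' Maple implementation.
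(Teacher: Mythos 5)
Your proposal follows the same three-step strategy as the paper's own proof: verify the four relations in $\mathcal{B}(V)\subseteq\kk Q$ via the product formula \eqref{CR formula} (including the characteristic-$2$ collapse $a^2=2[\e{\edge{g^2}{1}{g}},a]=0$, exactly as in the paper), show $\tilde I$ is a Hopf ideal by checking $\Delta(\tilde I)\subseteq\tilde I\ot H+H\ot\tilde I$ (with Takeuchi's lemma handling the antipode), and rule out skew-primitives in degree $\geq 2$ so that Theorem \ref{check-skew-prims} applies. The only difference is that you spell out the spanning/linear-independence check for the sixteen monomials, which the paper explicitly omits as straightforward; the argument is correct as proposed.
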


\begin{remark}\label{change-of-basis}
Making the change of basis $(v_1,v_2) \mapsto (v_1+v_2,v_2)$ in $V$ yields symmetric relations on new generators 
$\{c ,d \}$. The new relations are
\[
c ^2+c  d  +d  c   +d ^2 = 0, 
\,\quad c ^2d  + c d ^2 + d ^2c  + d c ^2 = 0, 
\,\quad c  ^4=d  ^4=0, 
\,\quad c  ^2d  ^2 + d  ^2c  ^2 = 0.
\]
We leave the details to the reader.
\end{remark}

Let  $\mathcal A(V)$ be the quotient of $T(V)=\kk\langle a,b\rangle$ by the ideal 
\[
  I=\langle a^2, \ b^4,\  b^2a+ab^2+aba,\  abab+baba\rangle.
\] 
We omit the straightforward check that ${\mathcal A}(V)$ is finite dimensional with basis as stated in the theorem. Our proof that $\mathcal A(V)=\mathcal B(V)$ proceeds in three parts:
\begin{enumerate}
\item[(i)] the Nichols algebra $\mathcal B(V)$ has the claimed relations;

\item[(ii)] the ideal $\tilde{I}=\langle a^2$, $b^4$, $b^2a+ab^2+aba$, $abab+baba \rangle$ of $T(V)\# \kk G$ is a Hopf ideal;

\item[(iii)] the quotient of $T(V)\# \kk G$ by $\tilde{I}$ has skew-primitives only in degrees $0$ and $1$.
\end{enumerate}

\begin{proof}[Proof of (i)] $\mathcal B(V)$ is generated by $a=\e{\edge{g}{1}{1}}$ and $b=\e{\edge{g}{2}{1}}$. Theorem \ref{product-formula} gives
\begin{eqnarray*}
    a^2\ = \ {[\e{\edge{g}{1}{1}}]}\cdot [\e{\edge{g}{1}{1}}] &=& 2[\e{\edge{g^2}{1}{g}},\e{\edge{g}{1}{1}}] \ = \ 0, \\
   b^2 \ = \ {[\e{\edge{g}{2}{1}}]}\cdot [\e{\edge{g}{2}{1}}] & = & 
       {(\e{\edge{g^2}{2}{g}},1)}{(g,\e{\edge{g}{2}{1}})} + 
                  {(g,\e{\edge{g^2}{2}{g}})(\e{\edge{g}{2}{1}} , 1)} \\
                  & = & {[\e{\edge{g^2}{2}{g}}+\e{\edge{g^2}{1}{g}} , \e{\edge{g}{2}{1}} ]} + 
                    {[\e{\edge{g^2}{2}{g}} , \e{\edge{g}{2}{1}}]}
           \ = \ {[\e{\edge{g^2}{1}{g}}, \e{\edge{g}{2}{1}}]} \ \neq \ 0.
\end{eqnarray*}
Similarly, we find $b^3\neq 0 $ and $b^4=0$.
So we have relations $a^2=0$ and $b^4=0$.
Turning to the next relation, we have
\begin{align*}
b^2a &= [\e{\edge{g^2}{1}{g}},\e{\edge{g}{2}{1}}] \cdot [\e{\edge{g}{1}{1}}] \\
  &= (\e{\edge{g^2}{1}{g}},\e{\edge{g}{2}{1}},1)(g,g,\e{\edge{g}{1}{1}}) + (\e{\edge{g^2}{1}{g}},g,\e{\edge{g}{2}{1}})
      (g,\e{\edge{g}{1}{1}},1) + (g^2,\e{\edge{g^2}{1}{g}},\e{\edge{g}{2}{1}})(\e{\edge{g}{1}{1}},1,1)\\
  &= [\e{\edge{g^3}{1}{g^2}},\e{\edge{g^2}{2}{g}}+\e{\edge{g^2}{1}{g}},\e{\edge{g}{1}{1}}]+[\e{\edge{g^3}{1}{g^2}},\e{\edge{g^2}{1}{g}},\e{\edge{g}{2}{1}}]
      +[\e{\edge{g^3}{1}{g^2}},\e{\edge{g^2}{1}{g}},\e{\edge{g}{2}{1}}]\\
  &= [\e{\edge{g^3}{1}{g^2}},\e{\edge{g^2}{2}{g}},\e{\edge{g}{1}{1}}]+[\e{\edge{g^3}{1}{g^2}},\e{\edge{g^2}{1}{g}},\e{\edge{g}{1}{1}}].
\end{align*}
{Similarly, we find that}
\[
	 ab^2 = [\e{\edge{g^3}{1}{g^2}},\e{\edge{g^2}{2}{g}},\e{\edge{g}{1}{1}}] 
	\quad\hbox{ and }\quad  
	aba = [\e{\edge{g^3}{1}{g^2}},\e{\edge{g^2}{1}{g}},\e{\edge{g}{1}{1}}],
\]
verifying the relation $b^2a=ab^2+aba$. Analogous calculations yield $abab=baba$.
\end{proof}

Before continuing the proof, we tailor an important result of Takeuchi \cite{Tk} to our needs (see \cite[Lemma 5.2.10]{Mo}). This will be used several times in what follows.

\begin{lemma}[Takeuchi]\label{takeuchi} Fix a bialgebra $B$ with coradical $B_0$. A map $f\in \mathop{Hom}_{\kk}(B,B)$ is convolution invertible if and only if $f\vert_{B_0} \in \mathop{Hom}_{\kk}(B_0,B)$ is convolution invertible.
\end{lemma}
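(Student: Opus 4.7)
The plan is as follows. The ($\Rightarrow$) direction is immediate: since $B_0$ is a subcoalgebra of $B$, the restriction map $\mathop{Hom}_{\kk}(B,B) \to \mathop{Hom}_{\kk}(B_0,B)$ is a homomorphism of convolution algebras, so any two-sided convolution inverse $h$ of $f$ restricts to a two-sided inverse $h|_{B_0}$ of $f|_{B_0}$. The content lies in the converse.

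For ($\Leftarrow$), let $\gamma \in \mathop{Hom}_{\kk}(B_0, B)$ be the given two-sided convolution inverse to $f|_{B_0}$. The plan is to build a two-sided inverse $g$ of $f$ on all of $B$ by correcting an arbitrary linear extension of $\gamma$. Concretely, choose any linear map $\tilde\gamma : B \to B$ with $\tilde\gamma|_{B_0} = \gamma$, and form $h := f * \tilde\gamma$. Then $h|_{B_0} = \eta\epsilon|_{B_0}$ by construction, so the difference $\psi := \eta\epsilon - h$ vanishes on the coradical. Using the standard inclusion
\[
  \Delta(B_n) \subseteq \sum_{i+j = n} B_i \otimes B_j
\]
built into the iterated wedge description of the coradical filtration, a short induction shows that $\psi^{*k}$ vanishes on $B_{k-1}$ for every $k \geq 1$: expanding $\psi^{*k}(b) = \sum \psi(b_{(1)})\, \psi^{*(k-1)}(b_{(2)})$ for $b \in B_{k-1}$, each summand has either $b_{(1)} \in B_0$ (killing $\psi(b_{(1)})$) or else $b_{(2)} \in B_{k-2}$ (killing $\psi^{*(k-1)}(b_{(2)})$ by induction hypothesis). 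Consequently the formal geometric series $h^{-1} := \sum_{k \geq 0} \psi^{*k}$ is pointwise finite in $\mathop{Hom}_{\kk}(B,B)$ and gives a two-sided convolution inverse to $h$, so $g := \tilde\gamma * h^{-1}$ satisfies $f * g = (f * \tilde\gamma) * h^{-1} = h * h^{-1} = \eta\epsilon$, producing a right convolution inverse of $f$.

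The main obstacle is the nilpotency statement $\psi^{*k}(B_{k-1}) = 0$; this is precisely where the coradical filtration does its work, and once it is in hand the geometric series yields the inverse essentially mechanically. To finish, I would construct a left inverse $g'$ of $f$ by the symmetric procedure---forming $h' := \tilde\gamma * f$, observing that $\eta\epsilon - h'$ again vanishes on $B_0$ and is therefore convolution-nilpotent in the same sense, and setting $g' := (h')^{-1} * \tilde\gamma$---and then invoke the standard identity $g' = g' * (f * g) = (g' * f) * g = g$ to conclude that $g = g'$ is a two-sided convolution inverse of $f$.
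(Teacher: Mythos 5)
Your proof is correct. The paper does not actually prove this lemma---it simply quotes it from Takeuchi \cite{Tk} (see \cite[Lemma 5.2.10]{Mo})---and your argument (reduce to a map agreeing with $\eta\epsilon$ on $B_0$, show $\psi=\eta\epsilon-h$ is locally convolution-nilpotent via $\Delta(B_{k-1})\subseteq B_0\ot B + B\ot B_{k-2}$, sum the geometric series, then match left and right inverses) is precisely the standard proof given in those references.
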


\begin{proof}[Proof of (ii)] Recall briefly the coalgebra structure of $H=T(V) \# \kk G$. Specifically, $\Delta(g)=g\otimes g$, $\Delta(a)=a\otimes 1 + g\otimes a$, $\Delta(b)=b\otimes 1 + g\otimes b$, $a\cdot g = ga$, $b\cdot g = g(a+b)$, $S(a)=-g^{-1}a$, and $S(b)=-g^{-1}b$. 

We need $\Delta(\tilde{I}) \subseteq \tilde{I}\otimes H + H \otimes \tilde{I}$.
The calculation is straightforward, though tedious, so we omit it here. For example,
from $\Delta(b^2) = b^2\otimes 1 + ga\otimes b + g^2\otimes b^2$, we get
\begin{align*}
\Delta(b^4) 
&= b^4 \otimes 1 + g(a^3 + ba^2+aba+b^2a+ab^2)\otimes b + g^2(a^2)\otimes b^2 + g^3(0)\otimes b + g^4 \otimes b^4\\
&\subseteq \tilde{I}\otimes H + H\otimes \tilde{I}.
\end{align*}
%
Once $\tilde{I}$ is known to be a bi-ideal, Takeuchi's result (Lemma \ref{takeuchi}) ensures that it is a Hopf ideal. Indeed, the quotient $H/\tilde{I}$ has coradical $\kk G$, and it is elementary to define the antipode $S$ (the convolution inverse of the identity map) for $\kk G$.
\end{proof}

\begin{proof}[Proof of (iii)] 
It suffices to check the claim for homogeneous skew-primitives (because in graded Hopf algebras, homogeneous components of skew-primitives are skew-primitive). This is again straightforward, though tedious; an explicit argument for degree two follows. Note that
\begin{gather*}
\begin{array}{c@{\,\,\,}c@{\,\,\,}c@{\,\,\,}c@{\,\,\,}c@{\,\,\,}c@{\,\,\,}c@{\,\,\,}c@{\,\,\,}c@{\,\,\,}c@{\,\,\,}c}
\Delta(ba) &=& g^2\otimes ba & + & 
     ba\otimes 1 &+& ga\otimes b &+& gb\otimes a &+& ga\otimes a, \\
\Delta(ab) &=& g^2\otimes ab & + &
     ab\otimes 1 &+& ga\otimes b &+& gb\otimes a, &&  \\
\Delta(b^2) &=& g^2\otimes b^2 &+&
     b^2\otimes 1 &+& ga\otimes b. && && 
\end{array}
\end{gather*}
No linear combination $\lambda ba + \mu ab + \nu b^2$ can be skew-primitive: 
the term $ga\otimes a$ occurs only in $\Delta(ba)$, so we need $\lambda=0$; this, in turn, forces $\mu=0$ and $\nu=0$.
%
\end{proof}

This completes the proof of Theorem \ref{rank2-char2}. An immediate consequence is the following.

\begin{cor}\label{char2-pointed} 
Assume $\kk$ has characteristic 2.
If ${\mathcal B}(V)$ is the Nichols algebra of Theorem \ref{rank2-char2}, then
${\mathcal B}(V)\# \kk G$ is a pointed
Hopf algebra of dimension $16n$ generated by $a$, $b$, and $g$, with relations
\begin{gather*}
	g^n=1, \quad\,\, g^{-1}ag = a, \quad\,\, g^{-1}bg = a + b, \\[0ex]
	a^2=0, \quad\,\, b^4=0, \quad\,\, baba = abab, \quad\,\, b^2a = ab^2 + aba.
\end{gather*}
The coproduct is determined by
$$
\Delta(g) = g\ot g , \quad\,\, \Delta(a) = a\ot 1 + g\ot a, \quad\,\,
  \Delta(b) = b\ot 1 + g\ot b.
$$
\end{cor}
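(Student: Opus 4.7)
The plan is to deduce the corollary from Theorem \ref{rank2-char2} together with standard properties of the Radford biproduct (smash product) construction. First, as a vector space, $\mathcal{B}(V)\#\kk G\cong \mathcal{B}(V)\otimes \kk G$, so its dimension is $16\cdot n = 16n$ by Theorem \ref{rank2-char2}. The coradical of a smash product of this type is $\kk G$, which is a direct sum of one-dimensional simple subcoalgebras (the grouplikes), so $\mathcal{B}(V)\#\kk G$ is automatically pointed.

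Next I would verify the relations. The group relation $g^n=1$ holds because $G=\langle g\rangle$ sits inside $\mathcal{B}(V)\#\kk G$ as the coradical. The four Nichols algebra relations $a^2=0$, $b^4=0$, $b^2a=ab^2+aba$, and $baba=abab$ are precisely the defining relations of $\mathcal B(V)$ provided by Theorem \ref{rank2-char2}. The cross relations come from the smash product rule $(1\otimes v)(g\otimes 1)=g\otimes (v\cdot g)$: since the $\kk G$-action on $V$ is $v_1\cdot g=v_1$ and $v_2\cdot g=v_1+v_2$, we obtain $ag=ga$ and $bg=g(a+b)$, equivalently $g^{-1}ag=a$ and $g^{-1}bg=a+b$, as claimed. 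To see that these relations are sufficient, let $A$ be the $\kk$-algebra presented by the listed generators and relations. The cross relations permit moving all occurrences of $g^{\pm 1}$ to the left of any monomial in $a,b$, so $A$ is spanned by $\{g^i w : 0\le i\le n-1,\ w\in \mathcal W\}$ where $\mathcal W$ is the 16-element basis of $\mathcal B(V)$ listed in Theorem \ref{rank2-char2}. Thus $\dim A\le 16n$. Since there is an obvious surjection $A\twoheadrightarrow \mathcal{B}(V)\#\kk G$ (the target satisfies all the relations of $A$) and the target has dimension exactly $16n$, the map is an isomorphism.

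Finally, the coproduct formulas follow from the Hopf quiver coalgebra structure on $\kk Q$. The element $g$ is grouplike, giving $\Delta(g)=g\otimes g$. The elements $a=\e{\edge{g}{1}{1}}$ and $b=\e{\edge{g}{2}{1}}$ are arrows from $1$ to $g$ in $Q$, and the coproduct formula from Section \ref{preliminaries} (reading $a_0=s(a_1)=1$ and $a_{n+1}=t(a_n)=g$) gives $\Delta(a)=a\otimes 1+g\otimes a$ and similarly for $b$. Equivalently, $a$ and $b$ lie in $V_g$, which forces these $(1,g)$-skew-primitive formulas in any Radford biproduct.

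No step is really an obstacle; the main point requiring care is the dimension bound $\dim A\le 16n$ for the presented algebra, which is a standard normal-form argument using the cross relations to reduce any monomial to the form $g^i w$ with $w$ in the basis of $\mathcal B(V)$. Everything else is a direct translation of Theorem \ref{rank2-char2} and the Yetter--Drinfeld data into the language of the smash product.
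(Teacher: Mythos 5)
Your proposal is correct and matches the paper's (implicit) argument: the paper presents this corollary as an immediate consequence of Theorem \ref{rank2-char2} together with the definition of the Radford biproduct, and your write-up simply supplies the routine details (dimension count, cross relations from the smash product rule, the normal-form argument for the presentation, and the skew-primitivity of the arrows). Nothing is missing and no step deviates from the intended route.
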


The structure of $\mathcal B(V)$ appears simpler in odd characteristic:
After rescaling generators, $\mathcal B(V)$
is a finite quotient of the Jordanian plane \cite{Sh}, as a consequence
of the following theorem.

\begin{thm}\label{rank2-charp}
Assume $\kk$ has characteristic $p>2$.
The Nichols algebra ${\mathcal B}(V)$ is the quotient of $T(V)$
by the ideal generated by
\begin{gather*}\label{eq:defining relations}
    a^p, \quad\,\, b^{\,p}, \quad\,\,  ba-ab-\frac{1}{2}a^2.
\end{gather*}
It has dimension $p^2$ and basis $\{a^ib^j\mid 0\leq i,j\leq p-1\}$. 
\end{thm}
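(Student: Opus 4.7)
The plan is to adapt the three-part argument from the proof of Theorem~\ref{rank2-char2}: (i) verify that $a^p = 0$, $ba - ab - \tfrac12 a^2 = 0$, and $b^p = 0$ hold in $\mathcal B(V) \subseteq \kk Q$ via the product formula \eqref{CR formula}; (ii) show that $\tilde I = \langle a^p, b^p, ba - ab - \tfrac12 a^2 \rangle$ is a Hopf ideal of $H = T(V) \# \kk G$; and (iii) show that $H/\tilde I$ has skew-primitives only in degrees $0$ and $1$. Theorem~\ref{check-skew-prims} then identifies $H/\tilde I$ with $\mathcal B(V)\# \kk G$, and a straightforward reordering argument (using $ba \equiv ab + \tfrac12 a^2$ to push $a$'s to the left, then truncating by the power relations) shows that $\mathcal A(V) := T(V)/I$ is spanned by the $p^2$ monomials $\{a^i b^j : 0 \le i, j \le p - 1\}$, yielding the claimed basis and dimension.

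For step (i), induction on $k$ using Theorem~\ref{product-formula} and the identity $v_1 \cdot g = v_1$ (which forces each of the $k$ thin splits contributing to $a\cdot a^{k-1}$ to yield the same length-$k$ path) gives
\[
a^k \;=\; k!\, \bigl[\e{\edge{g^k}{1}{g^{k-1}}}, \ldots, \e{\edge{g^2}{1}{g}}, \e{\edge{g}{1}{1}}\bigr],
\]
so $a^p = 0$. A direct one-step application of \eqref{CR formula} to $ab$, $ba$, and $a^2$ yields $ba - ab = \bigl[\e{\edge{g^2}{1}{g}}, \e{\edge{g}{1}{1}}\bigr] = \tfrac12 a^2$. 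The vanishing $b^p = 0$ in $\mathcal B(V)$ is best deduced a posteriori from the dimension equality coming from (ii) and (iii); a direct combinatorial verification, while possible, is messier.

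Step (ii) is the main technical point. The coproducts $\Delta(a^p) = a^p \otimes 1 + g^p \otimes a^p$ (from $ag = ga$ and $\binom{p}{i} \equiv 0 \pmod p$ for $0 < i < p$) and $\Delta(ba - ab - \tfrac12 a^2) = (ba - ab - \tfrac12 a^2) \otimes 1 + g^2 \otimes (ba - ab - \tfrac12 a^2)$ (after cancelling the mixed term $ga \otimes a$ against $\tfrac12\Delta(a^2)$) are routine. For $\Delta(b^p)$, set $u = b \otimes 1$ and $v = g \otimes b$ in $H \otimes H$, so componentwise multiplication gives $u^p = b^p \otimes 1 \in \tilde I \otimes H$ and $v^p = g^p \otimes b^p \in H \otimes \tilde I$. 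By Jacobson's $p$-th power formula,
\[
(u + v)^p \;=\; u^p + v^p + \sum_{i=1}^{p-1} s_i(u, v),
\]
where $i\, s_i(u, v)$ is the coefficient of $t^{i-1}$ in $\mathrm{ad}(tu + v)^{p-1}(u)$. Modulo $\tilde I \otimes H + H \otimes \tilde I$, a short induction using $ba \equiv ab + \tfrac12 a^2$ and $ag = ga$ yields the key identities
\[
[v,\, ga^k \otimes b] \;=\; 0, \qquad [u,\, ga^k \otimes b] \;=\; \tfrac{k+2}{2}\, ga^{k+1} \otimes b.
\]
Starting from $[v, u] = -ga \otimes b$, iterated brackets remain in the span of $\{ga^k \otimes b : k \ge 1\}$ and accumulate a coefficient proportional to $p!$, which vanishes modulo $p$. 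Hence every $s_i(u, v) \equiv 0$, so $\Delta(b^p) \in \tilde I \otimes H + H \otimes \tilde I$; Lemma~\ref{takeuchi} then lifts the bi-ideal to a Hopf ideal.

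For step (iii), I would use the grading to reduce to checking homogeneous elements of total degree $n \ge 2$. Expanding $\Delta(a^i b^j) = \Delta(a)^i \Delta(b)^j$ in $H/\tilde I$ for a putative skew-primitive $x = \sum_{i + j = n} \lambda_{ij}\, a^i b^j$, the ``extracted'' pieces $g^{n-1} a^{i-1} b^j \otimes a$ and $g^{n-1} a^i b^{j-1} \otimes b$ appear with coefficients that distinguish each pair $(i, j)$ uniquely; the same uniqueness argument used for degree two in Theorem~\ref{rank2-char2} then forces all $\lambda_{ij} = 0$. The main obstacle is step~(ii) for $\Delta(b^p)$: identifying the Jacobson reduction and the inductive commutator identities is where the bulk of the new work lies, while the rest of the proof is mechanical.
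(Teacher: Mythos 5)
Your three-part skeleton is exactly the paper's, and your treatments of $a^p$ and of $ba-ab-\tfrac12a^2$ in step (i) coincide with the paper's computations. Where you genuinely diverge is in the handling of $b^p$, in both steps (i) and (ii). The paper proves $b^p=0$ \emph{directly} in the path coalgebra by a combinatorial analysis of the $\ell$-thin splits of $[\e{\eedge{2}}]^\ell$ (its formula \eqref{x2n}), extracting the coefficients $\binom{p}{2}$ and $p$ in the leftmost position; you instead deduce $b^p=0$ a posteriori from Theorem \ref{check-skew-prims} once (ii) and (iii) are done. That is logically sound (the theorem identifies $\tilde I$ with the kernel of the canonical projection, so the relation must already hold in $\mathcal B(V)$), though it bypasses the product-formula computation that is the paper's advertised technique, and it shifts all the weight onto your step (ii). For step (ii) the paper derives the closed formula \eqref{eq: Delta(b)} for $\Delta(b^\ell)$ by induction (Lemma \ref{useful formulas}) and then kills the middle terms with $\binom{p}{k}\equiv 0$; your route via Jacobson's $p$-th power formula with $u=b\ot 1$, $v=g\ot b$ is a legitimate and arguably slicker alternative: I checked that $[v,u]=-ga\ot b$, $[v,ga^k\ot b]=0$, $[u,ga^k\ot b]=\tfrac{k+2}{2}ga^{k+1}\ot b$ modulo $\tilde I\ot H+H\ot\tilde I$, so every length-$p$ iterated bracket either dies on a $v$ or carries the coefficient $\pm p!/2^{p-1}=0$, whence all $s_i(u,v)\equiv 0$. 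Two small repairs to step (iii): the bidegree-$(n-1,1)$ terms you extract are $i\,g a^{i-1}b^j\ot a$ and $j\,ga^{i}b^{j-1}\ot b+\cdots$ (the group element is $g$, not $g^{n-1}$), and a general homogeneous element is $\sum_{r,i,j}\lambda_{rij}g^ra^ib^j$, so you must carry the $g^r\ot g^r$ prefactors through the linear-independence argument; with that, your $\ot a$ components force $\lambda_{rij}=0$ unless $i=0$, and the $\ot b$ components then kill $\lambda_{r0n}$ since $2\le n\le p-1$. The paper instead isolates the terms $g^{r+s}a^s\ot g^rb^{t_0}$ for $t_0$ maximal; both work. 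Finally, note that both you and the paper omit the linear independence of $\{a^ib^j\}$ in $T(V)/I$, which is needed for the dimension count you invoke.
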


Our proof will follow the three part strategy used in the proof of Theorem \ref{rank2-char2}. (We again omit the straightforward check of the basis.) 

\begin{proof}[Proof of (i)] To see that the Nichols algebra $\mathcal B(V)$ generated by $a=\e{\edge{g}{1}{1}}$ and $b=\e{\edge{g}{2}{1}}$ has the claimed relations, we appeal to Theorem \ref{product-formula}. For instance, we have
\begin{eqnarray*}
 a^2 = [\e{\edge{g}{1}{1}}]\cdot [\e{\edge{g}{1}{1}}] &=&(\e{\edge{g}{1}{1}},1)(g,\e{\edge{g}{1}{1}})+(g,\e{\edge{g}{1}{1}})
                       (\e{\edge{g}{1}{1}},1)  \\
            &=&[\e{\edge{g^2}{1}{g}},\e{\edge{g}{1}{1}}]+[\e{\edge{g^2}{1}{g}},\e{\edge{g}{1}{1}}]
                 =2[\e{\edge{g^2}{1}{g}},\e{\edge{g}{1}{1}}] , \\
 ab=[\e{\edge{g}{1}{1}}]\cdot [\e{\edge{g}{2}{1}}] &=& (\e{\edge{g}{1}{1}},1)(g,\e{\edge{g}{2}{1}})+
                      (g,\e{\edge{g}{1}{1}})(\e{\edge{g}{2}{1}},1)\\
                    &=& [\e{\edge{g^2}{1}{g}},\e{\edge{g}{2}{1}}]+[\e{\edge{g^2}{2}{g}},\e{\edge{g}{1}{1}}] , \\
 ba=[\e{\edge{g}{2}{1}}]\cdot [\e{\edge{g}{1}{1}}] &=& (\e{\edge{g}{2}{1}},1)(g,\e{\edge{g}{1}{1}})+(g,\e{\edge{g}{2}{1}})
                     (\e{\edge{g}{1}{1}},1)\\
    &=& [\e{\edge{g^2}{2}{g}},\e{\edge{g}{1}{1}}]+[\e{\edge{g^2}{1}{g}},\e{\edge{g}{1}{1}}]+
                  [\e{\edge{g^2}{1}{g}},\e{\edge{g}{2}{1}}].
\end{eqnarray*}
So $ba=ab+\frac{1}{2}a^2$ holds in $\mathcal B(V)$.

Now let $\ell$ be an arbitrary positive integer.
We claim that $a^\ell=\ell![\e{\eedge{1}},\cdots,\e{\eedge{1}}]$, where we have dropped the
subscripts since they are uniquely determined.
This we prove by induction on $\ell$. Clearly $a=[\e{\eedge{1}}]$, and we
have already computed $a^2=2[\e{\eedge{1}},\e{\eedge{1}}]$.
Assume $a^{\ell-1}=(\ell-1)![\e{\eedge{1}},\ldots,\e{\eedge{1}}]$.
Then
\begin{eqnarray*}
 a^\ell &=& (\ell-1)![\e{\eedge{1}}]\cdot [\e{\eedge{1}},\ldots,\e{\eedge{1}}]\\
  &=& (\ell-1)!\left((\e{\eedge{1}},1,\ldots,1)(g^{\ell-1},\e{\eedge{1}},\ldots,\e{\eedge{1}})
                    +(g,\e{\eedge{1}},1,\ldots,1)(\e{\eedge{1}}, g^{\ell-2},\e{\eedge{1}},\ldots,\e{\eedge{1}}) \right.\\
          &&\hspace{1cm}
            \left. +\cdots +(g,\ldots,g,\e{\eedge{1}})(\e{\eedge{1}},\ldots,\e{\eedge{1}},1)\right)\\
    &=& (\ell-1)!\left([\e{\eedge{1}},\ldots,\e{\eedge{1}}]+[\e{\eedge{1}},\ldots,\e{\eedge{1}}] +\cdots +
                     [\e{\eedge{1}},\ldots,\e{\eedge{1}}]\right)\\
     &=& \ell (\ell-1)![\e{\eedge{1}},\ldots,\e{\eedge{1}}] \ =\  \ell![\e{\eedge{1}},\ldots,\e{\eedge{1}}].
\end{eqnarray*}
It follows that $a^p=0$ (and no lower power of $a$ is 0)
since the characteristic of $\kk$ is $p$.

Next we verify that $b^{\,p}=0$. We will not fully compute $b^\ell$, but only
determine some numerical information about the coefficients of paths
in the expansion of $b^\ell$ as an element of the path coalgebra.
To find $b^\ell=([\e{\eedge{2}}])^\ell$,
we first consider all $\ell$-thin splits
of the first factor $[\e{\eedge{2}}]$:
$$(\e{\eedge{2}},1,\ldots,1), \ (g,\e{\eedge{2}},1,\ldots,1), \ldots, 
    \ (g,\ldots,g,\e{\eedge{2}}).
$$
For each of these, we consider all compatible $\ell$-thin splits of the
next factor $[\e{\eedge{2}}]$; for example, corresponding to $(\e{\eedge{2}},1,\ldots
,1)$, we consider
$$
(g,\e{\eedge{2}},1,\ldots,1), \ (g,g,\e{\eedge{2}},1,\ldots,1), \ldots, \
    (g,\ldots,g,\e{\eedge{2}}).
$$
Continuing, we see that $([\e{\eedge{2}}])^\ell$ is a sum of $\ell!$ terms,
each a product of $\ell$ many $\ell$-thin splits of $[\e{\eedge{2}}]$.
Note that
\begin{equation}\label{gigj}
   g^k\cdot \e{\edge{g}{2}{1}}=\e{\edge{g^{k+1}}{2}{g^k}} \ \mbox{ and } \ 
   \e{\edge{g}{2}{1}} \cdot g^i = i\e{\edge{g^{i+1}}{1}{g^i}} + \e{\edge{g^{i+1}}{2}{g^i}}.
\end{equation}
The choice of thin split from which comes the $\e{\edge{g}{2}{1}}$ in the leftmost
position determines the number of factors of $g$ to the left
(say $g^k$) and to the right (say $g^i$) of this $\e{\edge{g}{2}{1}}$.
The corresponding summands in the $\ell$-fold version of \eqref{CR formula} will have $i\e{\edge{g^{\ell-1}}{1}{g^{\ell-2}}}+\e{\edge{g^{\ell-1}}{2}{g^{\ell-2}}}$ in the leftmost position.
Using the same reasoning for the $\e{\edge{g}{2}{1}}$'s in the other positions,
we obtain
\begin{equation}\label{x2n}
  [\e{\eedge{2}}]^\ell=\sum_{0\leq i_2\leq 1, \,\ldots,\,0\leq i_{\ell}\leq \ell -1}
          [i_{\ell} \e{\eedge{1}}+\e{\eedge{2}}, \ldots, i_j\e{\eedge{1}}+\e{\eedge{2}},\ldots,
      i_{2}\e{\eedge{1}}+\e{\eedge{2}}, \e{\eedge{2}}].
\end{equation}
We consider the paths ending in $\e{\eedge{1}}$ and $\e{\eedge{2}}$ separately (the leftmost positions above). 

For the first, we may rewrite the relevant terms of \eqref{x2n} as
\begin{gather*}\label{x2n-1}
 \sum_{0\leq i_\ell\leq \ell-1} i_\ell \sum_{0\leq i_2\leq 1, \,\ldots,\,0\leq i_{\ell-1}\leq
   \ell -2}[\e{\eedge{1}}, \ldots, i_j\e{\eedge{1}}+\e{\eedge{2}},\ldots,
      i_{2}\e{\eedge{1}}+\e{\eedge{2}}, \e{\eedge{2}}],
\end{gather*}
and since the inner sum is independent of $i_\ell$, even as
\[
\binom{\ell}{2} \sum_{0\leq i_2\leq 1, \,\ldots,\,0\leq i_{\ell-1}\leq \ell -2}[\e{\eedge{1}}, \ldots, i_j\e{\eedge{1}}+\e{\eedge{2}},\ldots,
      i_{2}\e{\eedge{1}}+\e{\eedge{2}}, \e{\eedge{2}}].
\]
If $\ell=p$, then $\binom{\ell}{2}$ is divisible by $p$ (i.e., is zero in $\kk$) and these terms contribute zero to the sum \eqref{x2n}. Next, we consider the paths with $\e{\eedge{2}}$ in the leftmost position. The relevant terms of \eqref{x2n} now look like
\begin{gather*}\label{x2n-2}
 \sum_{0\leq i_\ell\leq \ell-1} 1 \sum_{0\leq i_2\leq 1, \,\ldots,\,0\leq i_{\ell-1}\leq
   \ell -2}[\e{\eedge{2}}, \ldots, i_j\e{\eedge{1}}+\e{\eedge{2}},\ldots,
      i_{2}\e{\eedge{1}}+\e{\eedge{2}}, \e{\eedge{2}}],
\end{gather*}
and again the inner sum is independent of $i_\ell$. 
When $\ell=p$, the result upon switching the order of summation is again zero
in $\kk$. We conclude that $b^p$ is zero in $\mathcal B(V)$. Note, moreover, that the coefficient of $[\e{\eedge{2}},\ldots,\e{\eedge{2}}]$ in the
expansion of $b^\ell$ is $\ell!$, so no lower power of $b$ is zero.
\end{proof}

Before turning to part (ii) of the proof, we isolate some useful formulas
holding in the quotient of $H=T(V)\# \kk G$ by 
the ideal generated by $ba-ab-\frac{1}{2}a^2$;
each may be verified with induction. (Below, $\rfac{x}{k} :=  x(x+1)\cdots (x+k-1)$ is the \emph{rising factorial}.) 

\begin{lemma}\label{useful formulas}
The following relations hold in $H/\langle ba-ab-\frac{1}{2}a^2\rangle$ for all $r,\ell \geq 1$:
\begin{align}
\label{eq: b*a}	
   b^r  a^\ell &= \sum_{k=0}^r \binom{r}{k} \frac1{2^k} \rfac{\ell}{k} \,
      a^{\ell+k} b^{r-k}, \\[0.1ex]
\label{eq: b*g}
   b^r  g^\ell &= \sum_{k=0}^r \binom{r}{k} \frac1{2^k} \rfac{2\ell}{k} \,
      g^\ell a^k b^{r-k} ,\\[0.1ex]
\label{eq: Delta(b)}
   \Delta(b^\ell) &= b^\ell \ot 1 
     \ +\ \sum_{k=1}^{\ell-1} \sum_{i=0}^k \binom{\ell}{k}
     \binom{k}{i} \frac{1}{2^i} \rfac{\ell-k}{i} g^{\ell-k} a^i b^{k-i}\ot b^{\ell-k} 
     \ +\ g^\ell\ot b^\ell , \\
\label{eq: S(b)}
   S(b^\ell) &=\sum_{k=0}^{\ell-1} (-1)^{\ell-k} \binom{\ell}{k} \frac{1}{2^k} \rfac{\ell-k}{k} g^{-\ell} a^k b^{\ell-k}.
\end{align}
\end{lemma}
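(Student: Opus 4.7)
The plan is to verify the four identities by nested induction, each step leveraging its predecessors. Throughout I would use three building-block relations holding in $H/\langle ba - ab - \tfrac{1}{2}a^2\rangle$: the defining commutation $ba = ab + \tfrac{1}{2}a^2$; the fact $ag = ga$, since $v_1 \cdot g = v_1$; and $bg = gb + ga$, since $v_2 \cdot g = v_1 + v_2$.

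For \eqref{eq: b*a}, I would first establish the single-$b$ case $ba^\ell = a^\ell b + \tfrac{\ell}{2}a^{\ell+1}$ by induction on $\ell$ from the defining relation. Then induct on $r$: writing $b^{r+1}a^\ell = b(b^r a^\ell)$, apply the inductive hypothesis and then the single-$b$ case to each summand $ba^{\ell+k}$, reindex, and collect. The coefficient match uses Pascal's identity together with the rising-factorial identity $\ell\cdot\rfac{\ell+1}{j-1} = \rfac{\ell}{j}$. The proof of \eqref{eq: b*g} has the same shape: the base case $bg^\ell = g^\ell b + \ell g^\ell a$ is a short induction on $\ell$ from $bg = gb + ga$ and $ag = ga$; then inducting on $r$ via $b^{r+1}g^\ell = b(b^r g^\ell)$, one applies the $r=1$ case and then \eqref{eq: b*a} to normalize each $ba^k = a^k b + \tfrac{k}{2}a^{k+1}$. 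Coefficient matching now invokes Pascal together with the recursion $(2\ell + k)\rfac{2\ell}{k} = \rfac{2\ell}{k+1}$.

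For \eqref{eq: Delta(b)}, I would induct on $\ell$ using $\Delta(b^{\ell+1}) = \Delta(b^\ell)\bigl(b\otimes 1 + g\otimes b\bigr)$. Right-multiplication by $b\otimes 1$ simply appends a $b$ on the right of each left tensor factor. Right-multiplication by $g\otimes b$ is the hard part: expanding $(g^{\ell-k}a^i b^{k-i})g = g^{\ell-k+1}a^i(b^{k-i}g)$ requires \eqref{eq: b*g} at $\ell = 1$ to rewrite $b^{k-i}g = \sum_j \binom{k-i}{j}\tfrac{1}{2^j}\rfac{2}{j}\, g\, a^j b^{k-i-j}$. After reindexing, the coefficient of $g^{\ell+1-k'}a^{i'}b^{k'-i'}\otimes b^{\ell+1-k'}$ in $\Delta(b^{\ell+1})$ becomes a double sum of products of binomials and rising factorials; verifying that it matches the predicted $\binom{\ell+1}{k'}\binom{k'}{i'}\tfrac{1}{2^{i'}}\rfac{\ell+1-k'}{i'}$ reduces to a Chu--Vandermonde-type identity. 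I expect this step to be the main obstacle, since the triple-indexed bookkeeping is the most intricate of the four.

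For \eqref{eq: S(b)}, I would use the anti-algebra morphism property $S(b^{\ell+1}) = S(b^\ell)S(b) = -S(b^\ell)\, g^{-1}b$. Inducting on $\ell$ with the claimed formula, I would slide $g^{-1}$ past the trailing $b^{\ell-k}$ via \eqref{eq: b*g} applied with $\ell$ replaced by $-1$; only $j = 0,1,2$ contribute since $\rfac{-2}{j}$ vanishes for $j \geq 3$, so the expansion is short. Then \eqref{eq: b*a} pushes $b$ past the $a^k$ factor to land in normal form. Coefficient matching once more reduces to a Pascal-plus-rising-factorial identity, analogous to but simpler than the one arising in \eqref{eq: Delta(b)}.
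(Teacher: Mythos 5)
Your proposal is correct and follows essentially the route the paper intends: the paper leaves this lemma to the reader, offering as hints exactly the ingredients you use --- induction, the special cases $ba^\ell = a^\ell b + \frac{\ell}{2}a^{\ell+1}$ and $b^r g^{-1} = g^{-1}\bigl(b^r - rab^{r-1}+\frac14 r(r-1)a^2b^{r-2}\bigr)$ (the latter being \eqref{eq: b*g} at $\ell=-1$, where as you note only $j=0,1,2$ survive), and the Chu--Vandermonde identity for rising factorials. The coefficient bookkeeping you describe checks out, so no further comment is needed.
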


We leave the proof of Lemma \ref{useful formulas} to the reader. Useful in the proofs of \eqref{eq: Delta(b)} and \eqref{eq: S(b)} are the following special cases of \eqref{eq: b*a} and \eqref{eq: b*g}:
\begin{gather*}
   b a^\ell = a^\ell b + \frac{\ell}{2}a^{\ell+1}  \quad\hbox{ and }\quad
   b^r g^{-1} = g^{-1}\bigl(b^r-rab^{r-1} +\frac{1}{4}r(r-1)a^2b^{r-2}\bigr). 
\end{gather*}
Also useful in the proof of Lemma \ref{useful formulas} is the binomial identity enjoyed by rising factorials,
\begin{gather*}\label{eq:pochhamer identity}
   \rfac{x+y}{\ell} = \sum_{k=0}^{\ell} \binom{\ell}{k}\rfac{x}{k}\rfac{y}{\ell-k}.
\end{gather*}

\begin{proof}[Proof of (ii)] The ideal $\tilde{I}=\langle a^p, \ b^p, \
ba-ab-\frac{1}{2}a^2\rangle $ 
of $H$ is a Hopf ideal. To show this, we first check the condition $\Delta(\tilde{I}) \subseteq \tilde{I}\otimes H + H \otimes \tilde{I}$. It is easy to see that 
\begin{gather*} 
   \Delta(a^\ell)=\sum_{k=0}^\ell \binom{\ell}{k}g^{\ell-k} a^k\ot a^{\ell-k} \quad ( \ell\leq p),
\end{gather*}
since $g$ acts trivially on $a$.
In particular, $\Delta(a^p)=a^p\ot 1 + g^p\ot a^p\in \tilde{I}\ot H
+H\ot\tilde{I}$. The generator $b^p$ is handled by Lemma \ref{useful formulas}. We turn to $ba-ab-\frac{1}{2}a^2$:  
\begin{eqnarray*}
 \Delta(ba-ab-\frac{1}{2}a^2) 
   &=& (ba-ab-\frac{1}{2}a^2)\ot 1 +g^2\ot (ba
     -ab-\frac{1}{2}a^2)\\
   &&\hspace{.4cm} + (ga+gb-gb-\frac{1}{2}\cdot 2ga)\ot a
        + (ga-ga)\ot b\\
  &=&  (ba-ab-\frac{1}{2}a^2)\ot 1 +g^2\ot (ba
     -ab-\frac{1}{2}a^2), 
\end{eqnarray*}
which belongs to $\tilde{I}\ot H + H\ot \tilde{I}$. Note that $ba-ab-\frac12 a^2$ is a skew-primitive element of degree~$2$ in $H$. In accordance with Theorem \ref{check-skew-prims}, it will be eliminated in $H/\tilde{I}$. 

We also must argue that $S(\tilde{I}) \subseteq \tilde{I}$. This follows from Takeuchi's result (Lemma \ref{takeuchi}), as in the proof of Theorem \ref{rank2-char2}. (But note that it is quite easy to verify directly, the hard work having been handled by \eqref{eq: S(b)}.)
\end{proof}

\begin{proof}[Proof of (iii)] After (ii), the quotient $H / \tilde{I}$ is a graded Hopf algebra, with degrees $0$ and $1$ spanned by $\kk G$ and $V,$ respectively. Thus to show that $H / \tilde{I}$ has skew-primitive elements only in degrees $0$ and $1$, it suffices to consider homogeneous skew-primitives. 
We work with the basis $\{g^r a^s b^t \mid 0\leq r<n;\, 0\leq s,t< p\}$ of 
$H/\tilde{I}$. 

If $s+t\geq2$, then the element $g^r a^s b^{t}$ is not skew-primitive. To see this, first write
\begin{align}
\Delta(a^s b^t) &=  
\nonumber   \left(\sum_{i=0}^s \binom{s}{i}
      g^{s-i} a^i\ot a^{s-i}\right) \!
   \left( \sum_{j=0}^{t } \sum_{k=0}^j \binom{t }{j}
     \binom{j}{k} \frac{1}{2^k} \rfac{t -j}{k} g^{t -j} a^k b^{\hskip.1ex j-k}\ot b^{t -j} \right) \\
\label{eq: Delta(ab)}
   &=  \sum_{i=0}^s \sum_{j=0}^t \sum_{k=0}^j \binom{s}{i}\binom{t }{j}\binom{j}{k} \frac1{2^k} \rfac{t -j}{k} g^{s+t-i -j} a^{i+k} b^{\hskip.1ex j-k} \otimes a^{s-i} b^{t -j}.
\end{align}
Since $s+t\geq 2$, there will be at least one term in this sum aside from the extremal terms $a^sb^t\otimes 1$ and $g^{s+t}\otimes a^s b^t$; its bi-degree will be strictly between $(s+t,0)$ and $(0,s+t)$. In particular, $a^s b^t$ is not skew-primitive and neither is $g^r a^s b^t$.

We now turn to a linear combination of basis elements $h=\sum_{r,s,t} \alpha_{r,s,t} g^ra^sb^t$. Again, we may assume $s+t$ is constant. If $h\neq 0$, then we may consider the nonzero terms $\alpha_{r,s,t} g^ra^sb^t$ with $t$ largest (say $t_0$). Applying $\Delta$ to the sum, we obtain terms of the form 
\[
	g^{r+s} a^s \otimes g^r b^{t_0}
\]
(set $i=s$ and $j=k=0$ in \eqref{eq: Delta(ab)} and multiply by $g^r \ot g^r$). These are linearly independent in $H/\tilde{I} \ot H/\tilde{I}$; moreover, only basis elements $g^r a^s b^t$ with $t=t_0$ contribute terms of this form to $\Delta(h)$. In order for $h$ to be skew-primitive, we need the corresponding coefficients $\alpha_{r,s,t_0}$ to be zero, violating the choice of $t_0$.
\end{proof}

This completes the proof of Theorem \ref{rank2-charp}. An immediate consequence is the following.

\begin{cor}\label{charp-pointed}
Assume $\kk$ has characteristic $p>2$.
If ${\mathcal B}(V)$ is the Nichols algebra of Theorem \ref{rank2-charp}, then
${\mathcal B}(V)\# \kk G$ is a $p^2n$-dimensional pointed Hopf algebra
generated by $a$, $b$, and $g$, with relations
\begin{gather*}
   g^n=1, \quad\,\, g^{-1}ag = a, \quad\,\, g^{-1}bg = a+b,    \\
   a^p=0,  \quad\,\, b^p  =   0, \quad\,\, ba = ab+\frac{1}{2} a^2.
\end{gather*}
The coproduct is determined by
$$
\Delta(g) = g\ot g , \quad\,\, \Delta(a) = a\ot 1 + g\ot a, \quad\,\,
  \Delta(b) = b\ot 1 + g\ot b.
$$
\end{cor}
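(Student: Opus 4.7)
The corollary is essentially a packaging statement: everything of substance has been done in Theorem~\ref{rank2-charp}. My plan is to show that the presentation listed is correct by combining that theorem with the definition of the smash product, and then read off the coalgebra and coradical information.

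First I would record dimension and algebra structure. By Theorem~\ref{rank2-charp}, $\mathcal B(V)$ has $\kk$-basis $\{a^i b^j : 0\leq i,j<p\}$ and presentation $\langle a^p,\, b^p,\, ba-ab-\tfrac12 a^2\rangle$ inside $T(V)$. Since $\mathcal B(V)\#\kk G\cong \kk G\otimes \mathcal B(V)$ as a $\kk$-vector space, the total dimension is $np^2$. The relations $g^n=1$ and those mixing $g$ with $a,b$ come from the smash product rule $(1\otimes v)(g\otimes 1)=g\otimes (v\cdot g)$: using $v_1\cdot g=v_1$ and $v_2\cdot g=v_1+v_2$, one obtains $ag=ga$ and $bg=g(a+b)$, i.e.\ $g^{-1}ag=a$ and $g^{-1}bg=a+b$. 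A standard argument (Diamond lemma, or a direct normal form computation) then verifies that the listed relations together with $g^n=1$ suffice: any word in $a,b,g$ can be reduced to the form $g^r a^s b^t$ with $0\leq r<n$ and $0\leq s,t<p$, and the span of such monomials has dimension at most $np^2$, which by the previous count must be exactly $np^2$. Hence no additional relations are needed.

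Second I would identify the coalgebra structure. The generators $a=\e{\edge{g}{1}{1}}$ and $b=\e{\edge{g}{2}{1}}$ are arrows starting at $1$ and ending at $g$, so they are skew-primitive with $\Delta(a)=a\otimes 1+g\otimes a$ and $\Delta(b)=b\otimes 1+g\otimes b$, while $\Delta(g)=g\otimes g$ as $g$ is a vertex. These determine the coproduct on all of $\mathcal B(V)\#\kk G$ since it is generated as an algebra by $a$, $b$, and $g$.

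Finally I would note pointedness. The Nichols algebra $\mathcal B(V)$ is a connected coradically graded Hopf algebra in the Yetter--Drinfeld category, so its degree-zero part is $\kk$. Bosonizing, the coradical of $\mathcal B(V)\#\kk G$ is $\kk G$, and since $\kk G$ is spanned by grouplikes the Hopf algebra is pointed.

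There is essentially no obstacle here; the only place one could slip is the claim that the listed relations give a full presentation of the smash product algebra. That, however, is forced by the dimension count $np^2$ from Theorem~\ref{rank2-charp} matching the upper bound provided by the normal form $g^r a^s b^t$.
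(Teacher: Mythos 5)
Your proposal is correct and matches the paper's intent: the paper offers no separate proof, presenting the corollary as an immediate consequence of Theorem \ref{rank2-charp} (whose proof already identifies $T(V)\#\kk G/\tilde{I}$ with ${\mathcal B}(V)\#\kk G$, which gives the stated presentation directly), together with the definition of the smash product. Your unpacking — the dimension count $np^2$, the commutation relations from $v_1\cdot g=v_1$ and $v_2\cdot g=v_1+v_2$, the skew-primitivity of the arrows, and pointedness from the coradical $\kk G$ — is exactly the routine verification the authors leave implicit.
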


\begin{remark}
In this section, we only considered certain two-dimensional indecomposable
Yetter--Drinfeld modules. 
Our preliminary investigations of larger indecomposable
Yetter--Drinfeld modules 
uncovered some relations in the corresponding Nichols algebras but we have not determined whether they are finite dimensional. 
\end{remark}

\section{Lifting}\label{future}

We point out that we found only graded pointed Hopf algebras in Section \ref{charp}. 
The Andruskiewitsch and Schneider classification program \cite{AS}
suggests that, after finding a new Nichols algebra $\mathcal B(V)$, 
one may construct filtered pointed Hopf algebras as
``liftings'' of ${\mathcal B}(V)\# \kk G$, that is those whose associated graded
algebra is ${\mathcal B}(V)\# \kk G$. 
Liftings were found in characteristic 0 in case $G$ is an abelian group by 
Andruskiewitsch and Schneider \cite{AS},
for nonabelian groups in the rank one case by Krop and Radford \cite{KR},
and for the rank one case in positive characteristic by Scherotzke \cite{SSch}. 
We now give some examples of liftings of the Hopf algebras in 
Section \ref{charp}.

In this section, $G=\langle g\rangle \cong \Z/n\Z$, with $n$ divisible by
the characteristic $p$ of $\kk$, and $V$ is the two-dimensional indecomposable
$\kk G$-module described at the beginning of Section~\ref{charp}.

\begin{thm}\label{charp-filtered}
Assume $\kk$ has characteristic $p>2$, and let $\lambda,\mu\in \kk$. 
There is a (filtered) pointed Hopf algebra, whose associated graded Hopf algebra
is ${\mathcal B}(V)\# \kk G$ of Corollary~\ref{charp-pointed}, generated by
$a$, $b$, and $g$,  with relations
\begin{gather*}
   g^n=1, \quad\,\, g^{-1}ag = a, \quad\,\, g^{-1}bg = a+b,    \\
   a^p=\lambda(1-g^p),  \quad\,\, b^p  =   \mu(1-g^p), \quad\,\, ba =ab+\frac{1}{2} a^2 .
\end{gather*}
The coproduct is determined by
$$
\Delta(g) = g\ot g , \quad\,\, \Delta(a) = a\ot 1 + g\ot a, \quad\,\,
  \Delta(b) = b\ot 1 + g\ot b.
$$
\end{thm}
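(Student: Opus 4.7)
The plan is to mimic the three-part argument used for Theorems \ref{rank2-char2} and \ref{rank2-charp}. Write $A$ for the algebra defined by the presentation in the statement, and let $H = T(V)\# \kk G$. Since $H$ already encodes the group relations and $ag = ga$, $bg = g(a+b)$, we may present $A$ as the quotient $H / \tilde{J}$, where $\tilde{J}$ is the ideal of $H$ generated by
\[
a^p - \lambda(1-g^p),\quad b^p - \mu(1-g^p),\quad ba-ab-\tfrac{1}{2}a^2.
\]
My aim is (i) to prove $\tilde{J}$ is a Hopf ideal of $H$, (ii) to compute $\dim A = p^2 n$, and (iii) to identify $\mathrm{gr}(A)$, under the filtration $\deg g = 0$, $\deg a = \deg b = 1$, with ${\mathcal B}(V)\# \kk G$ of Corollary \ref{charp-pointed}.

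For (i), the relation $ba - ab - \tfrac{1}{2}a^2$ was already shown to be Hopf compatible in the proof of Theorem \ref{rank2-charp}(ii). The new content is the inhomogeneous relations $a^p - \lambda(1-g^p)$ and $b^p - \mu(1-g^p)$. Since $g$ acts trivially on $a$ and the characteristic is $p$, the binomial theorem forces $\Delta(a^p) = a^p \otimes 1 + g^p \otimes a^p$. Formula \eqref{eq: Delta(b)} evaluated at $\ell = p$ also collapses, since $\binom{p}{k} \equiv 0 \pmod p$ for $0<k<p$ kills every middle summand, giving $\Delta(b^p) = b^p \otimes 1 + g^p \otimes b^p$. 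Combining each with $\Delta(1-g^p) = (1-g^p)\otimes 1 + g^p \otimes (1-g^p)$ yields
\[
\Delta\bigl(a^p-\lambda(1-g^p)\bigr) = \bigl(a^p-\lambda(1-g^p)\bigr) \otimes 1 + g^p \otimes \bigl(a^p-\lambda(1-g^p)\bigr),
\]
and an identical formula for $b^p - \mu(1-g^p)$; both sit in $\tilde{J}\otimes H + H\otimes \tilde{J}$. The antipode descends to $A$ by Takeuchi's Lemma \ref{takeuchi}, once the coradical of $A$ is known to be the image of $\kk G$.

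For (ii) and (iii), the rewriting rules $g^n\to 1$, $ag\to ga$, $bg\to g(a+b)$, $ba\to ab+\tfrac{1}{2}a^2$, $a^p\to \lambda(1-g^p)$, $b^p\to\mu(1-g^p)$ reduce any monomial in $g,a,b$ to the standard form $g^r a^s b^t$ with $0\le r<n$ and $0\le s,t<p$, showing $A$ is spanned by $p^2 n$ elements. For the matching upper bound, filter $A$ by $\deg g = 0$, $\deg a = \deg b = 1$. The top-degree components of the generators of $\tilde{J}$ are precisely $a^p$, $b^p$, and $ba-ab-\tfrac12 a^2$ (the group-like corrections $\lambda(1-g^p)$, $\mu(1-g^p)$ lie in degree $0 < p$), which are the defining relations of ${\mathcal B}(V)\# \kk G$. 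Thus there is a surjection ${\mathcal B}(V)\# \kk G \twoheadrightarrow \mathrm{gr}(A)$, forcing $\dim A = \dim \mathrm{gr}(A) \le p^2 n$. Equality of dimensions then simultaneously shows that the standard monomials form a basis of $A$ and that the surjection is an isomorphism, establishing $\mathrm{gr}(A) \cong {\mathcal B}(V)\#\kk G$. Pointedness of $A$ follows: the grouplike elements $\{g^r\}$ remain linearly independent in $A$ and span its coradical, since $A$ is generated by grouplike and skew-primitive elements.

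The main obstacle is the Hopf-ideal check for the inhomogeneous relations $a^p-\lambda(1-g^p)$ and $b^p-\mu(1-g^p)$: a priori their coproducts might produce stray terms that cannot be absorbed into $\tilde{J}\otimes H + H\otimes \tilde{J}$. What rescues the argument is the characteristic-$p$ collapse that makes $\Delta(a^p)$ and $\Delta(b^p)$ into ``primitive-like'' expressions of shape $x \otimes 1 + g^p \otimes x$, which match perfectly the coproducts of the corrections $\lambda(1-g^p)$ and $\mu(1-g^p)$. Once this miracle is recorded, the remainder is the standard filtered-lifting recipe, with the filtration automatically arranging that $\mathrm{gr}(A)$ is a quotient of ${\mathcal B}(V)\#\kk G$ and the reduction count supplying the reverse inequality.
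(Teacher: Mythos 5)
Your part (i) --- the verification that $\tilde{J}$ is a Hopf ideal --- is exactly the paper's proof: the quadratic relation is handled as in Theorem \ref{rank2-charp}, so the formulas of Lemma \ref{useful formulas} remain valid, the vanishing of $\binom{p}{k}$ for $0<k<p$ collapses $\Delta(a^p)$ and $\Delta(b^p)$ to the shape $x\ot 1+g^p\ot x$, this matches $\Delta(\lambda(1-g^p))$, and the antipode comes from Takeuchi's Lemma \ref{takeuchi}. The paper's proof stops there.

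The supplementary argument you give for the dimension and for $\mathrm{gr}(A)\cong{\mathcal B}(V)\#\kk G$ has a genuine logical gap. The rewriting argument shows that the monomials $g^ra^sb^t$ \emph{span} $A$, i.e.\ $\dim A\leq p^2n$; the surjection ${\mathcal B}(V)\#\kk G\twoheadrightarrow\mathrm{gr}(A)$ gives $\dim\mathrm{gr}(A)\leq p^2n$, which is the \emph{same} upper bound, since $\dim A=\dim\mathrm{gr}(A)$ holds for any filtered algebra. You then invoke ``equality of dimensions,'' but no lower bound $\dim A\geq p^2n$ has been established anywhere. This is exactly the point at risk for an inhomogeneous quotient: the ideal of top-degree components of $\tilde J$ could a priori be strictly larger than $\langle a^p,\,b^p,\,ba-ab-\frac12a^2\rangle$, in which case $\mathrm{gr}(A)$ would be a \emph{proper} quotient of ${\mathcal B}(V)\#\kk G$ and the theorem would fail. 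To close the gap you need an independent lower bound: for instance, a Diamond Lemma check that all ambiguities of your reduction system (such as $b\cdot b^{p-1}\cdot a$ or $b\cdot a^{p-1}\cdot a$) resolve consistently, so that the $p^2n$ standard monomials are linearly independent; or an argument that $\kk G$ and $V$ inject into $A$ as grouplikes and skew-primitives, whence the coradically graded Hopf algebra of $A$ contains a copy of ${\mathcal B}(V)\#\kk G$ and $\dim A\geq p^2n$ follows. Either route requires real work that is currently missing.
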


\begin{proof}
Let $\tilde{I}$ be the following ideal of $H=T(V)\# \kk G$:
\[
  \tilde{I} = \langle a^p-\lambda(1-g^p), \ b^p-\mu (1-g^p), \  
   ba-ab-\frac{1}{2}a^2  \rangle .
\]
We must show that $\tilde{I}$ is a Hopf ideal.
It will follow that $H/\tilde{I}$ is a pointed Hopf algebra with associated
graded Hopf algebra as claimed.
It suffices (by Lemma \ref{takeuchi}) to show that 
$\Delta(\tilde{I})\subseteq \tilde{I}\ot H + H\ot \tilde{I}$.
The quadratic relation is unchanged from Theorem \ref{rank2-charp}. This, in turn, means that
the formulas of Lemma \ref{useful formulas} hold in this new algebra. In particular, we have
\begin{eqnarray*}
  \Delta(a^p) &=& a^p\ot 1 + g^p\ot a^p,\\
  \Delta(b^p) &=& b^p\ot 1 + g^p\ot b^p,\\
  \Delta(ba-ab-\frac{1}{2}a^2) &=& (ba-ab-\frac{1}{2}a^2)\ot 1 + g^2\ot (ba-ab-\frac{1}{2}a^2).
\end{eqnarray*}
It follows that 
\begin{eqnarray*}
  \Delta(a^p-\lambda(1-g^p)) & = & a^p\ot 1 + g^p\ot a^p -\lambda(1-g^p)\ot 1
                                         -\lambda g^p\ot (1-g^p)\\
              &=& (a^p -\lambda(1-g^p))\ot 1 + g^p\ot (a^p - \lambda(1-g^p))\\
    & \in & \tilde{I}\ot H + H\ot \tilde{I},
\end{eqnarray*}
and similarly for $\Delta(b^p-\mu(1-g^p))$ and $\Delta(ba-ab-\displaystyle{\frac{1}{2}}a^2)$.
\end{proof}

\begin{remark}
A brief word on the characteristic 2 case. Following the pattern in Theorem \ref{charp-filtered}, it is not obvious how to lift the $a^2$ relation in Corollary \ref{char2-pointed}. However, the presentation in Remark \ref{change-of-basis} yields even more possibilities than found above: changing the quartic relations to read 
\[
  c^4 = \lambda(1-g^4), \ \ d^4 = \mu(1-g^4)\ \hbox{ and }\ c^2d^2 + d^2c^2 = \nu(1-g^4)
\]
gives a three parameter family of liftings. The result is a filtered pointed Hopf algebra generated by $c, d$ and $g$. (The group action is given by $gcg^{-1}=d$ and $gdg^{-1}=c$.)
The proof is similar to that of Theorem \ref{charp-filtered}. 
\end{remark}

We leave for future work the task of determining all liftings of the graded Hopf algebras
in Section \ref{charp}, as well as the study of rank two (graded and filtered) pointed Hopf algebras in positive characteristic $p$
obtained by using more general two-dimensional Yetter--Drinfeld modules.




\def\cprime{$'$}
\providecommand{\bysame}{\leavevmode\hbox to3em{\hrulefill}\thinspace}
\providecommand{\MR}{\relax\ifhmode\unskip\space\fi MR }
\providecommand{\MRhref}[2]{%
  \href{http://www.ams.org/mathscinet-getitem?mr=#1}{#2}
}
\providecommand{\href}[2]{#2}

\end{document}